\theoremstyle{plain}
\newtheorem{thm}[equation]{Theorem}
\newtheorem{prop}[equation]{Proposition}
\newtheorem{lem}[equation]{Lemma}
\newtheorem{rmks}[equation]{Remarks}
\numberwithin{equation}{subsection}
\newcommand{\C}{\mathbb{C}}
\newcommand{\G}{\mathbb{G}}
\newcommand{\A}{\mathbb{A}}
\newcommand{\Z}{\mathbb{Z}}
\newcommand{\N}{\mathbb{N}}
\newcommand{\Tr}{\operatorname{Tr}}
\newcommand{\Fr}{\operatorname{Fr}}
\newcommand{\Gal}{\operatorname{Gal}}
\newcommand{\Ind}{\operatorname{Ind}}
\newcommand{\Norm}{\operatorname{Norm}}
\newcommand{\Sym}{\operatorname{Sym}}
\newcommand{\quo}{(F\backslash \A)}
\newcommand{\su}{SU_{2,1}}
\title{The Adjoint $L$-function of $SU_{2,1}$}
\author{Joseph Hundley}
\begin{document}
\maketitle

{\sl To the memory of my grandfather, Harold H. Hensold, Jr.}

In these notes we give a construction for a certain $L$-function attached to a globally generic automorphic representation of the quasi-split unitary group in $3$ variables associated to a quadratic extension $E/F$ of number fields.  
Recall that the finite Galois form of the $L$-group of this group is a semidirect product of $GL_3(\C)$ and $\Gal(E/F).$  The representation we consider has the property that when restricted to $GL_3(\C)$ it is the adjoint representation of this group.  For this reason, we refer to the associated $L$ function as the adjoint $L$ function.  In fact, ther are two representations of $GL_3(\C)\rtimes \Gal(E/F)$ with the above property-- related to one another by twistng by the unique nontrivial one dimensional representation of $\Gal(E/F).$  We pin down precisely which on we are talking about in section \ref{s:defOfRepr}
below.  Let us mention that a small modification of this construction gives the other.

The construction is a slight modification of that given in \cite{Ginzburg-AdjointSL3}.

\section{Notation}
Let $F$ be a global field, and $\A$ its ring of ad\`eles.  Let $E=F(\tau)$ be a quadratic extension, such that $\rho:=\tau^2\in F.$  Let $J=\left(\smallmatrix 1&&\\&1&\\&&1\endsmallmatrix\right).$  Abusing notation, we will also denote by $J$ the analogous matrix of any size with points in any ring (with unity).
The $F$ points of our special unitary group may be thought of as the set of $3\times 3$ matrices with determinant $1$ with entries in $E$ such that $g J\;{}^t\bar g=J.$  Here $\bar{\phantom{g}}$ denotes conjugation by the nontrivial element of $\Gal(E/F).$  Presently we shall also identify this group with a group of matrices having entries in $F.$  We denote this group by $SU_{2,1}.$  

We consider also the split exceptional group of type $G_2$ defined over $F,$ which we denote 
simply by $G_2.$  We recall a few facts about this group.  (See, \cite{F-H}, pp. 350-57.)
First, it may be realized as the identity component of the group of automorphisms of a seven dimensional vector space which preserve a general skew-symmetric trilinear form.  Second, this seven dimensional ``standard'' representation of $G_2$ is orthogonal:  the image also preserves a symmetric bilinear form.  We wish now to pin things down explicitly.  It will be convenient to realize $G_2$ as a subgroup of $SO_8.$  

Thus, we consider $SO_8=\{g\in GL_8: gJ\;^tg=J\}.$  Let $v_0=^t(0,0,0,1,-1,0,0,0).$  By $SO_7$ 
we mean the stabilizer of $v_0$ in $SO_8.$  
Let $V_0$ denote the orthogonal complement of $v_0,$ defined relative to $J.$  
To fix an embedding of $G_2,$ into $SO_7,$ we fix a trilinear form of $V_0$ in general 
position, namely:  
$$T:=e_7^*\wedge(e_4^*+e_5^*)\wedge e_2^*+e_1^*\wedge(e_4^*+e_5^*)\wedge e_8^* +e_6^*\wedge (e_4^*+e_5^*)\wedge e_3^* +2e_3^*\wedge e_2^* \wedge e_8^* -2e_6^*\wedge e_7^*\wedge e_1^*$$
(which is obtained from the form written down on p. 357 of \cite{F-H} via suitable identifications).
The identity component of the stabilizer of $T$ in $GL(V_0)$ is a group of type $G_2,$ defined and split over $F,$ and contained in $SO_7$ as defined above.  

Now, let $v_\rho ={}^t(0,0,1,0,0,\rho,0,0),$ and let $H_\rho$ denote the stabilizer of $v_\rho$ in $G_2.$

\begin{lem}
$H_\rho \cong SU_{2,1}.$
\end{lem}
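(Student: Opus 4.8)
\medskip
\noindent\textbf{Proof idea.}
The plan is to realize $G_2$ as the automorphism group of the split octonion algebra over $F$ and to identify $H_\rho$ with the subgroup fixing a distinguished quadratic \'etale subalgebra, which is a special unitary group in three variables. In detail: $V_0$, with its $G_2$-action, is the space of trace-zero elements of the split octonion algebra $\mathbb O = F\oplus V_0$ over $F$, whose multiplication on $\mathbb O_0 = V_0$ has the form $v\cdot w = -\langle v,w\rangle\,1 + v\times w$, where $\langle\,,\rangle$ is a suitable scalar multiple of $B(v,w):={}^tvJw$ and $v\times w$ is the cross product defined by $\langle v\times w,z\rangle = T(v,w,z)$. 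Because $T$ is in general position, $\mathbb O$ is an $8$-dimensional composition algebra; it is split since $B$ is split; and the functorial content of the paragraph defining $G_2$ is that $\operatorname{Aut}(\mathbb O) = \operatorname{Stab}_{SO_7}(T) = G_2$. Every automorphism of $\mathbb O$ fixes $1\in\mathbb O$; hence one fixing $v_\rho$ fixes the subalgebra $K := F\cdot 1 + F\cdot v_\rho$ pointwise, so $H_\rho = \operatorname{Aut}(\mathbb O/K)$.

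Next one identifies $K$. Since $v\times v = 0$, we get $v_\rho^{\,2} = -N(v_\rho)\,1$ with $N$ the octonion norm, so $K$ is a $2$-dimensional commutative associative $F$-algebra, \'etale precisely when $N(v_\rho)\neq 0$, and then $K\cong F\bigl(\sqrt{-N(v_\rho)}\,\bigr)$. A one-line computation gives $B(v_\rho,v_\rho) = 2\rho\neq 0$ (we take $\operatorname{char}F\neq 2$); since $N$ and $B$ are proportional on the irreducible $G_2$-module $V_0$, it remains to compute the proportionality constant — this is where the explicit form $T$ is used, via the canonical symmetric bilinear form attached to a generic $3$-form in seven variables — and one finds $-N(v_\rho)\equiv\rho\pmod{F^{\times 2}}$, so $K\cong E$. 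This is exactly what the choice $v_\rho = {}^t(0,0,1,0,0,\rho,0,0)$, $\rho = \tau^2$, is arranged to produce.

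Finally one invokes the structure of octonion algebras containing a prescribed quadratic \'etale subalgebra (the Cayley--Dickson doubling / first Tits construction; see, e.g., Springer--Veldkamp, or Knus--Merkurjev--Rost--Tignol): $\mathbb O$ is built from $K$ and a nondegenerate rank-three Hermitian space $(W,h)$ over $K/F$, with $W$ appearing inside $\mathbb O$ as (a $K$-twist of) the orthogonal complement of $K$, and $\operatorname{Aut}(\mathbb O/K)\cong SU(h)$. As $\mathbb O$ is split, $h$ is the quasi-split Hermitian form of rank three over $E/F$ (Witt index one), so $SU(h)$ is the quasi-split unitary group $\su$ acting on its standard three-dimensional $E$-module; transporting $W$ to $E^3$ gives precisely the realization of $\su$ by $F$-rational matrices used above. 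Hence $H_\rho\cong\su$.

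The crux — and the step I expect to absorb the most effort — is the normalization: one must compute the canonical bilinear form of $T$, compare it with $B = {}^t(\cdot)\,J\,(\cdot)$, and carry the resulting scalar accurately, both to pin down $K$ as $E$ itself (rather than just some quadratic \'etale algebra) and to match $h$ with the particular Hermitian form used above to present $\su$. Two cheap consistency checks: $\dim G_2 - \dim\{v\in V_0 : B(v,v) = 2\rho\} = 14 - 6 = 8 = \dim\su$; and after base change to $\overline F$, $H_\rho$ becomes the stabilizer of a non-isotropic vector in the seven-dimensional representation of $G_2$, namely the long-root $SL_3\subset G_2$, which is the split form of $\su$. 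If one prefers to dispense with octonions altogether, the same conclusion follows from a direct matrix computation: realize $\mathfrak g_2 = \operatorname{Lie}G_2\subset\mathfrak{so}_7$ using $T$, intersect with the annihilator of $v_\rho$, recognize the resulting $8$-dimensional Lie algebra as $\mathfrak{su}_{2,1}$, and pass to the connected groups.
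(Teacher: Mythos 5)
Your argument is correct in outline but proceeds by a genuinely different route from the paper. The paper's proof is a two-line pincer: it quotes Rallis--Schiffmann (pp.~808--810) for the dichotomy that the stabilizer of an anisotropic vector in the seven-dimensional representation of $G_2$ is either $SL_3$ or $SU(Q)$, and then observes that $H_\rho$ sits inside the group preserving both the symmetric form $B$ restricted to $\langle v_0,v_\rho\rangle^\perp$ and the alternating form $T(v_\rho,\cdot,\cdot)$ on that six-dimensional space --- a group identified with $U_{2,1}$ via exactly the basis dictionary recorded in Remark (2). That explicit dictionary is not decoration: it is what pins down the Hermitian form as the one attached to $E=F(\tau)$, and it is reused throughout the paper (the matrix forms of $N$, $N_2$, and the torus over $F(\sqrt\rho)$). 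Your octonionic argument replaces the Rallis--Schiffmann citation by the structure theory of composition algebras: $H_\rho=\operatorname{Aut}(\mathbb O/K)$ for the quadratic \'etale algebra $K=F+Fv_\rho$, and $\operatorname{Aut}(\mathbb O/K)\cong SU(h)$ for the rank-three Hermitian space over $K$. This buys a conceptual explanation of the $SL_3$-versus-$SU_{2,1}$ dichotomy (split versus nonsplit $K$, i.e.\ $\rho$ a square or not --- exactly the case division the paper exploits later in the unramified computation), at the cost of deferring the one computation that actually decides the answer, namely that $-N(v_\rho)$ lies in $\rho\, F^{\times 2}$ so that $K\cong E$ rather than some other quadratic algebra; you flag this honestly, but until that constant is computed (or read off from the basis identification in Remark (2), which settles it) the lemma as stated --- with $E=F(\tau)$, $\tau^2=\rho$, specifically --- is not yet proved. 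Your two consistency checks (the dimension count $14-6=8$ and the geometric-fiber comparison with the long-root $SL_3$) are sound but only confirm the group up to forms.
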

\begin{rmks}
\begin{enumerate}
\item This is essentially the same embedding of $SU_{2,1}$ into $G_2$ described on p. 371 of \cite{BenArtzi-Ginzburg}.  
\item One may also obtain this embedding by making the following identifications between an $F$-basis of $E^3$ and one for the orthogonal complement of $\langle v_0,v_\rho\rangle$ in $F^8.$
$$\begin{array}{rclcrclcrcl}
(1,0,0)&\leftrightarrow& e_1 
& \quad &
(-\tau^{-1},0,0)&\leftrightarrow& e_2
&\quad& 
(0,-2\tau,0)&\leftrightarrow&e_3-\rho e_6\\
(0,-2,0)&\leftrightarrow& e_4+e_5 
&\quad& 
(0,0,2\tau) &\leftrightarrow& e_7 
&\qquad& (0,0,2)&\leftrightarrow &e_8
\end{array}$$
\end{enumerate}
\end{rmks}
\begin{proof}
On the one hand, we know from \cite{R-S}, pp.808-810 we know that the stabilizer of a vector in this
representation having  nonzero length (relative to $J$) is isomorphic to either $SL_3$ or $SU(Q)$ 
for a suitable $Q.$  On the other hand, $H_\rho$ is clearly contained in the group of automorphisms of
the six dimensional complement of $v_\rho$ in $V_0$ which preserve both the original symmetric bilinear form and the skew symmetric form obtained by plugging in $v_\rho$ as one of the arguments of $T.$  This latter group is isomorphic to $U_{2,1}$ (with an isomorphism being given by the identification of bases above).  The result follows.  
\end{proof}
To aid in visualizing these groups and checking various assertions below, we write down a general 
element of each of their Lie algebras.
\begin{eqnarray}
\label{e:G2LieAlg}
G_2: \begin{pmatrix}
T_1&a&c&d&d&e&f&0\\
g&T_2-T_1&b&-c&-c&d&0&-f&\\
h&l&2T_1-T_2&a&a&0&-d&-e\\
i&-h&g&0&0&-a&c&-d\\
i&-h&g&0&0&-a&c&-d\\
j&i&0&-g&-g&T_2-2T_1&-b&-c\\
k&0&-i&h&h&-l&T_1-T_2&-a\\
0&-k&-j&-i&-i&-h&-g&-T_1\end{pmatrix}\\
\label{e:SU21LieAlg}
SU_{2,1}:
\begin{pmatrix}
T_1&a&-\rho e&d&d&e&f&0\\
\rho a&T_1&-\rho d&\rho e&\rho e&d&0&-f\\
h&l&0&a&a&0&-d&-e\\
-\rho l&-h&\rho a&0&0&-a&-\rho e&-d\\
-\rho l&-h&\rho a&0&0&-a&-\rho e&-d\\
-\rho h&-\rho l&0&-\rho a&-\rho a&0&\rho d&\rho e\\
k&0&\rho l&h&h&-l&-T_1&-a\\
0&-k&\rho h&\rho l&\rho l&-h&-\rho a&-T_1\\
\end{pmatrix}
\end{eqnarray}

The set of upper triangular matrices in $G_2$ is a Borel subgroup $B_{G_2}$, and the 
set of diagonal matrices in $G_2$ is a maximal torus $T_{G_2}.$  We use this torus and Borel
to define notions of ``standard'' for parabolics and Levis.  We also fix a maximal 
compact subgroup $K=\prod_v K_v$ of $G_2(\A)$ such that 
$G_2(F_v)=B_{G_2}(F_v)K_v$ for all $v$ and $K_v=G_2(\frak o_v)$ for almost all finite $v.$  
(Here $\frak o_v$ denotes the ring of integers of $F_v.$)

For any matrix $A$ we let $_tA$ denote the ``other transpose'' $J\;^tAJ,,$
obtained by reflecting $A$ over the diagonal that runs from upper right to lower left.
Finally, if $H$ is any $F$ group, then $H\quo := H(F)\backslash H(\A).$

\subsection{The representations $r$}
Let us now describe explicitly the representation $r$ which appears in the Langlands $L$ function we will construct.  We first describe the $L$-group we consider, which is the finite Galois form of the $L$ group of $U_{2,1}(E/F).$  Let $\Fr$ denote the nontrivial element of $\Gal(E/F).$  Our $L$ group is $GL_3(\C) \rtimes \Gal(E/F),$ where the semidirect product structure is such that 
\begin{equation}\label{e:frobconj}
\Fr \cdot g \cdot \Fr = \;_tg^{-1}.  
\end{equation}
Now consider the $8$ dimensional complex vector space of $3\times 4$ traceless matrices, with an action of $GL_3(\C)$ by conjugation.  The definition 
\begin{equation}\label{e:frobact}
\Fr\cdot X = \; _tX\end{equation}
extends this to a well-defined action of $GL_3\rtimes \Gal(E/F).$  This is our representation $r.$

It is not difficult to check that there is only one other way to define the action of $\Fr$ which is 
compatible with \eqref{e:frobconj} and \eqref{e:frobact}, namely $\Fr\cdot X= -{}_tX.$  Now, 
it is part of the $L$-group formalism that the parameter of $\pi$ at an unramified place $v$ is in the identity component iff $\rho$ is a square in the completion of $F$ at $v.$  Hence, if we let $r'$ denote the representation corresponding to the action $\Fr\cdot C=-{}_tX,$ then 
$L^S(s,\pi,r')$ is the twist of $L^S(s,\pi,r)$ by the quadratic character corresponding to the 
extension $E/F.$  An integral for this $L$-function may be obtained from the one considered in this 
paper by inserting this character into the induction data for the Eisenstein series.
\label{s:defOfRepr}

\subsection{Eisenstein series}
We shall make use of the same Eisenstein series on $G_2\quo$ as in  \cite{Ginzburg-AdjointSL3}.
We recall the definition.  Let $P$ denote the standard maximal parabolic of $G_2$ such that the short simple root of $G_2$ is a root of the Levi factor of $P.$  Take $f$ a $K$-finite flat section of the 
fiber-bundle of representations $\Ind_{P(\A)}^{G_2(\A)}|\delta_P|^s$
(non-normalized induction).  
Thus, for each $s,$ $f(g,s)$ is a function $G_2(\A)\to \C$ such that $f(pg,s)=|\delta_P(p)|^s f(g,s)$
for all $p\in P(\A), g\in G_2(\A),$ and for $k \in K,$ the value of $f(k,s)$ is independent of $s.$ 
The associated Eisenstein series is defined by the formula
$$E(g,s)=\sum_{\gamma \in P(F)\backslash G_2(F)} f(\gamma g,s)$$
for $\Re(s)$ sufficiently large, and by meromorphic continuation elsewhere.

\section{Unfolding}
\begin{lem}
The space $P(F)\backslash G_2(F)/SU_{2,1}(F)$ has two elements, represented by the 
identity and (any representative in $G_2(F)$ for) the simple reflection in the Weyl group of $G_2$ associated to the long simple root,
which we denote $w_2.$ 
\end{lem}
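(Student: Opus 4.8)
The plan is to compute the double coset space $P(F)\backslash G_2(F)/SU_{2,1}(F)$ via the action of $G_2$ on the flag-type variety $P(F)\backslash G_2(F)$. Since $P$ is a maximal parabolic whose Levi contains the short simple root, the generalized flag variety $P\backslash G_2$ is a projective homogeneous space of dimension $5$; concretely, because the standard $7$-dimensional representation of $G_2$ realizes $G_2\subset SO_7$, this variety can be identified with the variety of isotropic lines (or a $G_2$-orbit therein) in the $7$-dimensional quadratic space $V_0$. The cosets $P(F)\backslash G_2(F)/SU_{2,1}(F)$ are then the $SU_{2,1}(F)$-orbits on this set of isotropic lines. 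Recall from the construction that $SU_{2,1}=H_\rho$ is the stabilizer in $G_2$ of the \emph{anisotropic} vector $v_\rho$; hence an isotropic line $\ell\subset V_0$ gives rise to the $SU_{2,1}$-invariant $\langle v_\rho,\ell\rangle$ (evaluated via $J$), and the vanishing or non-vanishing of this pairing is the first discrete invariant distinguishing orbits.

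First I would fix the two obvious double cosets and check they are distinct. The identity coset corresponds to the line $\ell_0$ spanned by the highest weight vector fixed by $B_{G_2}$; one reads off from the explicit Lie algebra \eqref{e:G2LieAlg} which coordinate vector this is, and checks $\langle v_\rho,\ell_0\rangle\neq 0$. For the coset of $w_2$ (the long simple reflection), I would compute $\ell_0 w_2$ explicitly as a coordinate line using a matrix representative of $w_2$ in $G_2(F)$, and verify that $\langle v_\rho, \ell_0 w_2\rangle = 0$. This shows the two cosets are genuinely distinct. (Alternatively, one invokes the Bruhat decomposition: the relevant double cosets in $P\backslash G_2/B_{G_2}$ are indexed by $W_P\backslash W_{G_2}$, and $1$ and $w_2$ represent different classes.)

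Next I would show there are no others, i.e. that $SU_{2,1}(F)$ has exactly two orbits on $P(F)\backslash G_2(F)$. The cheapest route is dimension-plus-connectedness over an algebraic closure followed by a Galois-cohomology check, but to stay within the elementary framework of the paper I would instead argue directly on isotropic lines: given an isotropic line $\ell$ with $\langle v_\rho,\ell\rangle\neq 0$, use the transitivity of $G_2$ (hence the relevant parabolic) together with the $U_{2,1}$-action on the orthogonal complement of $v_\rho$ to move $\ell$ to $\ell_0$; given $\ell$ with $\langle v_\rho,\ell\rangle = 0$, one has $\ell\subset v_\rho^\perp\cap V_0$, a $6$-dimensional space on which $SU_{2,1}$ acts as $U_{2,1}$ via the identification of bases in the Remarks, and I would show the isotropic lines there on which the induced structure matches that of $\ell_0 w_2$ form a single orbit. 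This amounts to a Witt-type transitivity statement for $U_{2,1}$ on isotropic lines of its Hermitian space, which is standard.

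The main obstacle is the last step: controlling the $F$-rational orbits (as opposed to geometric orbits) and making sure that the stabilizer computations do not hide an extra orbit coming from a nontrivial Galois-cohomology class — in particular ruling out that the "$\langle v_\rho,\ell\rangle = 0$" locus breaks into more than one $SU_{2,1}(F)$-orbit. I expect this to come down to the fact that $U_{2,1}$ (equivalently, the relevant unitary group) is quasi-split and that $H^1(F, \text{(point stabilizer)})$ contributes nothing new here, or can be handled by an explicit Witt-extension argument over $F$ using the basis identification already recorded. Once that transitivity is in hand, combining it with the two explicit representatives from the second paragraph finishes the proof.
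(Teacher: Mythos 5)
Your strategy is the mirror image of the paper's: you propose to count $SU_{2,1}(F)$-orbits on $P(F)\backslash G_2(F)$, whereas the paper uses the defining property of $SU_{2,1}=H_\rho$ as a stabilizer to identify $G_2(F)/SU_{2,1}(F)$ with the $G_2(F)$-orbit of the explicit vector $v_\rho$, and then counts $P(F)$-orbits on that set of vectors. In the paper's picture everything is elementary: a vector $v$ in the orbit lies in $V_0$ and has norm $2\rho$; writing $v={}^t(v_1,v_2,v_3)$ with $v_1,v_3\in F^2$, the condition $v_3=0$ is visibly $P(F)$-invariant (since $P$ preserves $\langle e_7,e_8\rangle^\perp$) and separates the cosets of $1$ and $w_2$; and one checks by hand that $P(F)$ acts transitively on each of the two resulting sets. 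No Galois cohomology is needed because the transitivity is checked directly for the parabolic, which has plenty of unipotent elements to work with.

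Your version has two genuine problems. First, your concrete model of $P(F)\backslash G_2(F)$ is the wrong homogeneous space. The paper's $P$ is the maximal parabolic whose Levi contains the \emph{short} simple root, and from \eqref{e:G2LieAlg} one sees that $P$ is the stabilizer of the isotropic $2$-plane $\langle e_1,e_2\rangle$, not of an isotropic line: the highest weight of the seven-dimensional representation is the short fundamental weight, so the stabilizer of the highest weight line $\langle e_1\rangle$ is the \emph{other} maximal parabolic (Levi containing the long simple root). The quadric of isotropic lines is a single $G_2$-orbit, so your parenthetical hedge does not rescue the identification, and the entire subsequent analysis --- the invariant $\langle v_\rho,\ell\rangle$ for a line $\ell$, the Witt-type transitivity of $U_{2,1}$ on isotropic lines of its Hermitian space --- is carried out on the wrong variety; adapting it to special isotropic $2$-planes changes both the invariants and the transitivity statement, and you would still need to match the resulting orbit representatives with $1$ and $w_2$ for the correct $P$, which is what the unfolding later relies on. Second, even within your own framework, the decisive step --- that the locus where the invariant vanishes is a \emph{single} rational orbit, i.e.\ that no extra orbit is hiding in a Galois cohomology class --- is exactly the part you leave as ``I expect this to come down to\dots''; that is the entire content of the lemma and is not actually proved. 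I would switch to the paper's dual picture, where this issue simply does not arise.
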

\begin{proof}
This follows easily from our characterization of $SU_{2,1}$ as a stabilizer.  Indeed, $P(F)\backslash G_2(F)/SU_{2,1}(F)$ may be identified with the set of $P(F)$ orbts in the $G_2(F)$-orbit of $v_\rho.$  Write $v \in G_2(F)v_\rho$
as $^t(v_1,v_2,v_3)$ with $v_1,v_3\in F^2$ and $v_2\in F^4.$  Either $v_3=0$ or not.  This distinction clearly separates $P(F)$ orbits, and in particular separates the $P(F)$-orbit of the identity from that of $w_2.$  On the other hand, an element of the $G_2(F)$-orbit of $v_\rho$ is certainly in $V_0$ and of norm $2\rho.$  It is not hard to check that $P(F)$ permutes the set of such elements with $v_3=0$ and $v_3\ne 0$
each transitively. 
\end{proof}

Let $\varphi_\pi$ be a cusp form in the space of an irreducible automorphic cuspidal representations
$\pi$ of $SU_{2,1}(\A).$  Let $N$ denote the maximal unipotent subgroup of $SU_{2,1}$
$$\left\{\begin{pmatrix} 1&x&y\\&1&-\bar x\\ &&1\end{pmatrix}: x,y \in E, \Tr y+\Norm x =0\right\}.$$
Fix a nontrivial additive character $\psi$ of $\quo,$ and let $\psi_N$ denote the character of $N(\A)$ 
with coordinates as above to $\psi(\frac 12 \Tr x).$  (The $\frac 12$ is for convenience:  it cancels the $2$ that ariss when we take the trace of an element of $F.$)

We assume that the integral 
\begin{equation}\label{e:whittaker}
W_{\varphi_\pi}(g):=\int_{N\quo}\varphi_\pi(ng)\psi_N(n) \; dn\end{equation}
does not vanish identically.  (And hence, that $\pi$ is generic.)  We consider the integral 
$$I(\varphi_\pi, f,s):=\int_{SU_{2,1}\quo}\varphi_\pi(g)E(g,s).$$

\begin{thm} {\rm (The Unfolding)}  Let $N_2$ denote the two-dimensional unipotent 
subgroup of $SU_{2,1}$ corresponding to the coordinates $e$ and $f$ in \eqref{e:SU21LieAlg}.
Then for $\Re(s)$ sufficiently large, 
\begin{equation}\label{e:Unfolding}
I(\varphi_\pi, f,s) = \int_{N_2(\A)\backslash SU_{2,1}(\A)}
W_{\varphi_\pi}(g)f(w_2g,s) dg.
\end{equation}
\end{thm}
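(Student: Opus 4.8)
The strategy is the standard Rankin–Selberg unfolding, carried out in two stages. First I would substitute the definition of the Eisenstein series into $I(\varphi_\pi,f,s)$ and interchange the sum over $P(F)\backslash G_2(F)$ with the integral over $SU_{2,1}\quo$; this is justified for $\Re(s)$ large by absolute convergence, using the rapid decay of the cusp form $\varphi_\pi$ against the moderate growth of $E(g,s)$. Collapsing the sum against the quotient, the resulting integral becomes a sum over the double cosets $P(F)\backslash G_2(F)/SU_{2,1}(F)$; for each representative $\gamma$ one obtains an integral over $(\gamma^{-1}P(F)\gamma \cap SU_{2,1}(F))\backslash SU_{2,1}(\A)$ of $\varphi_\pi(g)f(\gamma g,s)$. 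By the double-coset lemma proved above, there are exactly two terms, indexed by the identity and by $w_2$.

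The next step is to show the contribution of the identity double coset vanishes. Here $\gamma^{-1}P(F)\gamma\cap SU_{2,1}(F)$ is the stabilizer in $SU_{2,1}(F)$ of the line $F v_\rho$ mod the $v_3=0$ condition; concretely it is a parabolic subgroup $Q$ of $SU_{2,1}$ whose unipotent radical contains a nontrivial subgroup of $N$. One factors the integral over $Q\quo\backslash SU_{2,1}(\A)$ through an inner integration over (a piece of) the unipotent radical of $Q$, against which $f(\gamma g,s)$ is invariant; since $\varphi_\pi$ is cuspidal this inner integral is zero. This kills the identity term, leaving only the $w_2$ term.

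For the surviving $w_2$ term, I would first identify $w_2^{-1}P(F)w_2\cap SU_{2,1}(F)$ explicitly. Using the realization of the double coset space via the vector $v_\rho$ and the explicit Lie algebra \eqref{e:SU21LieAlg}, this intersection should be the semidirect product of a torus-normalizer piece with the unipotent subgroup $N_2$ attached to the coordinates $e,f$ — equivalently, $f(w_2 \cdot, s)$ restricted to $SU_{2,1}(\A)$ is left-invariant under $N_2(\A)$ and, more importantly, the full left quotient appearing is $N_2(\A)\backslash SU_{2,1}(\A)$ after absorbing the discrete part. Then one does the standard "unfold the remaining Fourier coefficient" manoeuvre: the integral over the arithmetic quotient, when combined with the root-subgroup structure inside $P(F)$ versus $SU_{2,1}$, produces exactly the Whittaker integration \eqref{e:whittaker} defining $W_{\varphi_\pi}$, with the character $\psi_N$ matching because of the chosen normalization of $\psi_N$ (the factor $\tfrac12$). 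Collecting terms yields the right-hand side of \eqref{e:Unfolding}.

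The main obstacle is the bookkeeping in this last stage: one must pin down precisely which subgroup of $P(F)$ survives the intersection with $w_2^{-1}$-conjugate of $SU_{2,1}(F)$, verify that the unipotent integration that emerges is exactly $N\quo$ with the character $\psi_N$ (no extra summation and no residual unfolding is left over), and check that the complementary integration over $G_2$-unipotent directions not in $SU_{2,1}$ exactly reconstitutes the $f(w_2 g,s)$ with no leftover volume factors. This is a careful matching of root data between $G_2$ and $SU_{2,1}$ via the explicit embedding; convergence at each interchange follows from the Whittaker expansion of $\varphi_\pi$ together with the gauge estimates for $f(w_2 g,s)$, exactly as in \cite{Ginzburg-AdjointSL3}.
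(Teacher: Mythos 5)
Your proposal is correct and follows essentially the same route as the paper: the two-term double-coset expansion, vanishing of the identity term by cuspidality, identification of $SU_{2,1}\cap w_2Pw_2$ as the one-dimensional $F$-split torus together with $N_2$, and a final Fourier-theoretic unfolding. The step you leave as a ``standard manoeuvre'' is carried out in the paper by expanding $\varphi_\pi$ along the one-parameter unipotent subgroup complementary to $N_2$ in $N$, killing the constant term by a second appeal to cuspidality, and using that the $F$-split torus permutes the nontrivial Fourier coefficients simply transitively, so that the torus quotient collapses the sum to the single term yielding $W_{\varphi_\pi}$.
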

\begin{proof}
By the lemma, we find that $I(\varphi_\pi,f,s)$ is equal to 
$$\int_{(\su\cap P)(F)\backslash \su(\A)}
\varphi_\pi(g)f(g,s)dg+
\int_{\su\cap w_2Pw_2)(F)\backslash \su(\A)}
\varphi_\pi(g)f(w_2g,s) dg.$$
The first integral vanishes by the cuspidality of $\pi.$ 

The group $\su \cap w_2Pw_2$ consists of the one-dimensional $F$-split torus and the two-dimensional unipotent group $N_2.$  Incidentally, when written as elements of $GL_3(E),$ this unipotent group 
is 
$$\left\{\begin{pmatrix}
1&r\tau&t\tau +\frac{r^2\rho}2\\
&1&r\tau \\&&1\end{pmatrix}: r,t\in F
\right\}.$$
We now expand $\varphi_\pi$ along the subgroup of elements of the form 
$$\begin{pmatrix} 1&s&-\frac{s^2}2\\ &1&-s\\&&1\end{pmatrix}.$$
The constant term vanishes by cuspidality.  The remaining terms are permuted simply transitively by the action of the $F$-split torus.  The term corresponding to $1$ yields the integral \eqref{e:whittaker}.
\end{proof}

\section{Unramified computations}
We now consider the value of the local analogue of \eqref{e:Unfolding} at a place where all
data is unramified.  Thus, let $F$ be a nonarchimedean local field.  We denote the nonarchimedean 
valuation on $F$ by $v,$ and the cardinality of the residue field by $q.$
We keep the definitions of all 
the algebraic groups above.  However, we now allow the possibility that $\rho$ is a square in $F.$  In 
this case the group $\su$ defined by the equations above is isomorphic to $SL_3$ over $F.$  
We assume that $\rho$ and $2$ are both units in $F.$  In this section we encounter only the $F$-points
of algebraic groups, so we suppress the ``$(F).$'' 

Let $f$ be the spherical vector in the 
induced representation $\Ind_P^{G_2} |\delta_P|^s,$ and let $W$ denote the normalized spherical 
vector in the Whittaker model of an unramified local representation $\pi$ of $GL_3.$ The integral 
we consider is 
$$I(s,\pi) =\int_{N_2\backslash \su}W(g)f(w_2g,s)dg.$$
The main result of this section is the following:
\begin{prop}\label{p:Unramified}
For $\Re(s)$ sufficiently large, 
$$I(s,\pi) = 
\frac{L(3s-1,\pi,r)}{\zeta(3s)\zeta(6s-2)\zeta(3s-9)}.$$
Here, all zeta and $L$ functions are local.  Thus, if $q$ is the number of elements in the residue field of $F,$ then $\zeta(3s)=(1-q^{-3s})^{-1},$ etc.
\end{prop}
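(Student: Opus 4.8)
The plan is to carry out the standard unramified Rankin–Selberg computation, following the template of \cite{Ginzburg-AdjointSL3} but adapted to the $SU_{2,1}$ situation. First I would choose an explicit Iwasawa-type decomposition for $N_2\backslash \su$ adapted to the integral: since the integrand involves $f(w_2g,s)$, which transforms on the left under $P$, and $W(g)$, which is determined by its values on the torus via the Whittaker functional, I would write a general $g\in \su$ as a product of a torus element, a unipotent piece not absorbed into $N_2$, and an element of the maximal compact $\su(\frak o)$. The measure decomposes accordingly, and on the compact part $f$ and $W$ are both spherical so the integral reduces to a sum/integral over a lattice of torus cosets.

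Next I would make the coordinates completely explicit using the Lie algebra description \eqref{e:SU21LieAlg}. The one-dimensional split torus and the unipotent $N_2$ (the $e,f$ coordinates) are the pieces of $\su\cap w_2Pw_2$; the relevant reduced integration variables are the remaining root-subgroup coordinates together with the split torus parameter. The key computation is then evaluating $f(w_2 n t k, s)=f(w_2 n t,s)$ for $n$ in the complementary unipotent and $t$ in the torus. This requires (i) writing $w_2 n t$ back in the form $p\cdot k'$ with $p\in P(F_v)$ to read off $|\delta_P(p)|^s$, and (ii) tracking for which range of the coordinate $n$ the element $w_2 n$ lies in the "big cell" versus lands back in $P$. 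This is where one picks up the local zeta factors in the denominator: summing the resulting geometric-type series in the unipotent variable over $F_v$ produces $\zeta(3s)^{-1}$ and similar, exactly as the intertwining-operator normalization would predict.

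For the torus sum, I would feed in the explicit formula for the spherical Whittaker function $W$ on $GL_3$ — the Shintani/Casselman–Shalika formula — expressing $W$ on a diagonal torus element as a Schur polynomial in the Satake parameters times the appropriate modulus factor. Combined with the $|\delta_P|^s$ factor coming from $f(w_2\,\cdot\,,s)$, the torus integral becomes a sum over a cone of dominant coweights of (Schur polynomial) $\times q^{-(\text{linear form in }s)}$. Recognizing this sum as a ratio of $L$-factors is the crux: one uses the identity that $\sum_\lambda s_\lambda(x) t^{|\lambda|}$-type generating functions evaluate to products $\prod (1-x_i t)^{-1}$, and more precisely that the specific combination arising here is the Langlands $L$-factor $L(3s-1,\pi,r)$ for $r$ the $8$-dimensional representation on traceless $3\times 4$ matrices defined in section \ref{s:defOfRepr}. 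I would verify the shift "$3s-1$" and the three denominator zeta factors $\zeta(3s)\zeta(6s-2)\zeta(3s-9)$ by matching the cocharacter lattice normalization: $\delta_P$ on the relevant torus direction contributes the factor $3$, and the constants $0$, $-2$, $-9$ (equivalently the shifts $3s$, $6s-2$, $3s-9$) come from the $\rho_P$-shift and the structure of the roots of $G_2$ outside the Levi of $P$.

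The main obstacle I anticipate is bookkeeping the $G_2$-geometry precisely enough: correctly identifying the double-coset representatives and the "big cell" condition for $w_2 n$, and then matching the resulting combinatorial sum to the Langlands $L$-factor with the exact argument shift and the exact set of correction zeta factors. The Whittaker-function input and the generating-function identity are standard; the delicate part is the normalization constants, which must be pinned down by a careful root-by-root analysis in $G_2$ (using \eqref{e:G2LieAlg}) rather than by any soft argument. A useful consistency check at the end is that the quotient $I(s,\pi)$ should have the expected poles and functional-equation-compatible shape, and that specializing $\rho$ to a square (so $\su\cong SL_3$) recovers the computation of \cite{Ginzburg-AdjointSL3}.
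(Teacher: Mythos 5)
Your outline follows essentially the same route as the paper: Iwasawa decomposition of $N_2\backslash\su$, reduction of the unipotent integral to a rank-one computation (the paper isolates this as $\int_F f(w_2x_{\alpha_2}(u),s)\psi(cu)\,du$, an $SL_2$ calculation which is the source of the factor $\zeta(3s)^{-1}$ and of a truncated geometric series in $x=q^{-3s+1}$), the Casselman--Shalika formula for the torus sum, and a generating-function identity --- in the paper, the Poincar\'e series of the algebra of ${}^LU$-invariants in $\Sym^*(V)$ for $V$ the adjoint representation --- to recognize $L(3s-1,\pi,r)$ together with the remaining denominator factors.

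The genuine gap is that the computation you describe is really only the \emph{split} case, and you relegate the non-split case to a ``consistency check'' at the end. For a quadratic extension $E/F$ the inert places are unavoidable, and there the argument changes in ways your plan does not address: the maximal $F$-split torus of $\su$ is one-dimensional, so the Whittaker sum runs over a single integer $m$ rather than a cone of dominant coweights, and $W$ on the torus is not a Schur polynomial in three independent Satake parameters of $GL_3$; the Satake parameter of $\pi$ lies in the non-identity coset of $GL_3(\C)\rtimes\Gal(E/F)$, represented by $\left(\left(\begin{smallmatrix}\mu&&\\&1&\\&&\mu^{-1}\end{smallmatrix}\right),\Fr\right)$, so computing $L(3s-1,\pi,r)$ requires decomposing the eight-dimensional $r$ into the $\pm1$ eigenspaces of $\Fr$ (of dimensions $5$ and $3$) and reading off the eigenvalues of this twisted class on each; one needs a Casselman--Shalika-type evaluation identifying $K(t)$ with $\Tr\Gamma_m\left(\begin{smallmatrix}\mu^2&\\&\mu^{-2}\end{smallmatrix}\right)$, which in turn requires pinning down the unramified character of the torus of $\su$ attached to the twisted parameter; and the final combinatorial identity matching the resulting one-variable sum to the $L$-factor is different from the split-case identity. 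None of this is a soft specialization of the split computation, so as written your proof is incomplete precisely at the places where the group is a genuine unitary group.
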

\begin{proof}
We begin with some computations which are valid regardless of whether or not $\rho$ is a square in $F.$  The one-dimensional subgroup of $\su$ corresponding to the variable $d$ in \eqref{e:SU21LieAlg}
maps isomorphically onto the quotient $N_2\backslash N.$  An element of this group may also be 
expressed as $x_{\alpha_2}(\rho u) x_{2\alpha_1+\alpha_2}(-u),$
 where $x_{\alpha_2}$ and $x_{2\alpha_1+\alpha_2}$ are maps of $\G_a$ onto the one parameter unipotent subgroups oof $G_2$ corresponding to the indicated roots.  These subgroups correspond to the variables $b$ and $d$ in 
 \eqref{e:G2LieAlg}.  Using the Iwasawa decomposition, we express the integral over $N_2\backslash \su$ as integrals over the maximal compact $K,$ the torus $T,$ and this one-dimensional subgroup.  
 Since $W$ and $f$ are spherical, and the volume of $K$ is $1,$ we may erase the integrall over $K.$  Also $w_2x_{2\alpha_1+\alpha_2}(u) \in P.$  Hence, we find that 
 $$I(s,\pi)=\int_T\int_F f(w_2x_{\alpha_2}(\rho u)t,s)
 \psi_u du W(t) \delta^{-1}_B(t) dt.$$
 Here $\delta_B$ denotes the modular quasicharacter of the Borel subgroup of $\su.$  
 Now, an element of $T$ may be visualized as an element of $SL_3(F(\sqrt{\rho}))$ of the form 
 $$\begin{pmatrix} a+b\sqrt{\rho}&&\\&\frac{a-b\sqrt{\rho}}{a+b\sqrt{\rho}}&\\&&\frac1{a-b\sqrt{\rho}}\end{pmatrix}.$$  
 The corresponding $8\times 8$ matrix is 
 $$\begin{pmatrix}
 a&-b&&&&&&\\
 -b\rho&a&&&&&&\\
 &&\frac{a^2}N&-\frac{ab}N&-\frac{ab}N&-\frac{b^2}N&&\\
 &&-\frac{ab\rho}N&\frac{a^2}N&\frac{b^2\rho}N&\frac{ab}N&&\\
 &&-\frac{ab\rho}N&\frac{b^2\rho}N&\frac{a^2}N&\frac{ab}N&&\\
 &&-\frac{b^2\rho^2}N&\frac{ab\rho}N&\frac{ab\rho}{N}&\frac{a^2}N&&\\
 &&&&&&\frac aN&\frac bN\\
 &&&&&&\frac{b\rho}N&\frac aN\\
 \end{pmatrix}, \qquad \text{ where }N:=a^2-b\rho^2.$$
We now write the Iwasawa decomposition for this as an element of $G_2.$  First, suppose that 
$|b\rho|\le |a|.$  Then the decomposition is 
$$\left(\begin{smallmatrix}
1&-\frac ba&&&&&&\\
&1&&&&&&\\
&&1&-\frac ba&-\frac ba&-\frac{b^2}{a^2}&&\\
&&&1&&\frac ba&&\\
&&&&1&\frac ba&&\\
&&&&&1&&\\
&&&&&&1&\frac ba\\
&&&&&&&1\\
\end{smallmatrix}\right)\left(\begin{smallmatrix}
\frac Na&&&&&&&\\
&a&&&&&&\\
&&\frac N{a^2}&&&&&\\
&&&1&&&&\\
&&&&1&&&\\
&&&&&\frac{a^2}N&&\\
&&&&&&\frac 1a&\\
&&&&&&&\frac aN\\
\end{smallmatrix}\right)\left(\begin{smallmatrix}
1&&&&&&&\\
\frac{b\rho}a&1&&&&&&\\
&&1&&&&&\\
&&\frac{b\rho}a&1&&&&\\
&&\frac{b\rho}a&&1&&&\\
&&-\frac{b^2\rho^2}{a^2}&-\frac{b\rho}a&-\frac{b\rho}a&1&&\\
&&&&&&1&\\
&&&&&&-\frac{b\rho}a&1\\
\end{smallmatrix}\right).$$
If, $|b\rho|>|a|,$ it is
$$\left(\begin{smallmatrix}
&&&&&&&\\
&&&&&&&\\
&&&&&&&\\
&&&&&&&\\
&&&&&&&\\
&&&&&&&\\
&&&&&&&\\
&&&&&&&\\
\end{smallmatrix}\right)\left(\begin{smallmatrix}
\frac N{b\rho}&&&&&&&\\
&b\rho&&&&&&\\
&&\frac{N}{b^2\rho^2}&&&&&\\
&&&1&&&&\\
&&&&1&&&\\
&&&&&\frac{b^2\rho^2}N&&\\
&&&&&&\frac1{b\rho}&\\
&&&&&&&\frac{b\rho}N\\
\end{smallmatrix}\right)\left(\begin{smallmatrix}
&&&&&&&\\
&&&&&&&\\
&&&&&&&\\
&&&&&&&\\
&&&&&&&\\
&&&&&&&\\
&&&&&&&\\
&&&&&&&\\
\end{smallmatrix}\right).$$
Let us denote the three factors by $u',t'$ and $k',$ respectively.  Then $u'$ has the 
property that 
$w_2u'w_2^{-1}$ and 
$w_2[x_{\alpha_2}(u),u']w_2^{-1}$ (where $[\,,\,]$ denotes the commutator) are both in $P.$ 
Thus
$f(w_2x_{\alpha_2}(u)t,s)=f(w_2x_{\alpha_2}(u)t',s).$
We have
$$I(s,\pi)=\int_T\left(\int_F f(w_2x_{\alpha_2}(u),s)\psi(\alpha_2(t')u) du\right)
K(t) \delta_B^{-\frac12}(t) \delta_P^s(w_2t'w_2)|\alpha_2(t')|dt,$$
where $t'$ is as above, and $K(t):=W(t)\delta_B(t)^{-\frac 12}.$  We find that 
$$\delta_B^{-\frac12}(t)=|N|^{-1},\qquad
\delta_P(w_2t'w_2)=\frac{|N|^2}{\max(|a|,|b|)^3},
\qquad |\alpha_2(t')|=\frac{\max(|a|,|b|)^3}{|N|}.$$
\begin{lem}
$$\int_F f(w_2x_{\alpha_2}(u),s)\psi(cu) du=(1-q^{-3s})\frac{(1-q^{(-3s+1)(v(c)+1)})}{(1-q^{-3s+1})}.$$
\end{lem}
\begin{proof}
There is an embedding $j$ of $SL_2$ into $G_2$ such that $j\left(\begin{smallmatrix} &1\\-1&\end{smallmatrix}
\right)=w_2$ and $j\left(\begin{smallmatrix} 1&u\\&1\end{smallmatrix}\right)=x_{\alpha_2}(u).$
The lemma is a well-known computation from $SL_2$ applied to this copy oof $SL_2.$  One has only to check that 
$f\left(j\left(\begin{smallmatrix}t&\\&t^{-1}\end{smallmatrix}\right),s\right)=t^{-3s}.$
\end{proof}
  Let $x=q^{-3s+1}.$  Then the above reads
$$(1-q^{-1}x)\frac{(1-x^{v(c)+1})}{(1-x)}.$$
To complete the argument, we must consider the two cases ($\su$ splits over $F$ or does not)
separately.

\subsubsection{Split Case}  In this case put $t_1=a+b\sqrt{\rho}$ and $t_2=a-b\sqrt{\rho}.$  
Then $t_1$ and $t_2$ are just two independent variables ranging over $F^\times.$  The quantity called
``$N$'' above us equal to $t_1t_2.$  Since $\rho$ and $2$ are units, $\max(|a|,|b|)=\max(|t_1|,|t_2|).$  
Let $\beta_1,\beta_2$ denote the simple roots of $SL_3.$  We get
$$\delta_P(w_2t'w_2)=\frac{|t_1t_2|^2}{\max(|t_1|,|t_2|)^3}
=\min(|t_1^{-1}t_2^2|,|t_1^2t_2^{-1}|)=\min(|\beta_1(t)|,|\beta_2(t)|),$$
$$|\alpha_2(t')|=\frac{\max(|t_1|,|t_2|)^3}{|t_1t_2|}=\max(|\beta_1(t)|,|\beta_2(t)|).$$
Now define two integer-valued variables, depending on $t$ by $m_i=v(\beta_i(t)),\; i=1,2.$
As $t$ ranges over the torus of $SL_3,$ the pair $(m_1,m_2)$ ranges over
$$\{(m_1,m_2)\in \Z^2: m_1-m_2 \text{ is divisible by }3\}.$$
Every part of our local integral can now be expressed in terms of $m_1$ and $m_2.$  First, we consider the function $K(t).$  This is evaluated using the Casselman-Shalika formula \cite{C-S}.  
It is equal to zero unless $m_1$ and $m_2$ are both non-negative.  If $m_1$ and $m_2$ are both non-negative, then the pair corresponds to a dominant weight for the group $PGL_3(\C).$  Let 
$\Gamma_{m_1.m_2}$
denote the corresponding irreducible finite dimensional representation, which we may also regard as a representation of $GL_3(\C).$  Then we have
$$K(t)=\Tr \Gamma_{m_1,m_2}(\tilde t_\pi),$$ where $\tilde t_\pi$ is the conjugacy class of $GL_2(\C)$ associated to the local representation $\pi.$  Also 
$\delta_P(w_2t'w_2)=q^{-\max(m_1,m_2)},|\alpha_2(t')|=q^{-\min(m_1,m_2)},$ and $\delta_B^{-\frac12}(t)=q^{-m_1-m_2}.$  Thus, we consider,
$$(1-q^{-1}x)\sum_{m_1,m_2}\frac{1-x^{\min(m_1,m_2)+1}}{1-x}x^{\max(m_1,m_2)}\Tr \Gamma_{m_1,m_2}(\tilde t_\pi),$$
where the sum is over $m_1,m_2$ both non-negative, such that $m_1-m_2$ is divisible by $3.$

We now make use of the relationship between local Langlands $L$-functions and the Poincar\'e series of 
certain graded algebras.  We first review some definitions.  Fix $N\in \N,$ and 
$$A=\bigoplus_{i_1,\dots, i_N\in\N}A_{i_1,\dots, i_N}$$
a graded algebra over a field $k.$  The Poincar\'e series of $A$ is a power series in $N$ indeterminates
$$\sum_{i_1,\dots, i_N=0}^\infty \dim(A_{i_1,\dots,i_N})T_1^{i_1} \dots T_N^{i_N}.$$
The graded algebra which is relevant for consideration of Langlands $L$-functions is described as follows.  Let $^LG$ be a semisimple complex Lie group, and $(r,V)$ a finite-dimensional representation.  
Inside the symmetric algebra $\Sym^*(V)$ we consider the subalgebra $\Sym^*(V)^{^LU}$ of $^LU$-invariants.  
This subalgebra contains the highest weight vectors of each of the irreducible components 
of $Sym^*(V)$ 
and is graded by the semigroup of dominant weights of $^LG$ as well as by degree.  

Let us use a slightly different notation from that above.  We use $X$ for the indterminate associated 
to the grading by degree, and $T_1, \dots, T_N$ for the grading by weight.  Let $\pi$ be an 
unramified representation of $G(F)$ where $F$ is a non-archimedean local field  and $G$ is an
 algebraic group such that $^LG$ is the $L$-group.  Let $\tilde t_\pi$ be the semisimple conjugacy class in $^LG$ corresponding to $\pi.$  Then it follows from the definitions that the local Langlands $L$-function $L(s,\pi,r)$ may be obtained from the Poincar\'e series of $\Sym^*(V)^{^LU}$ by substitution $q^{-s}$ for $X$ and $\Tr\Gamma_{k_1\varpi_1+\dots +k_N\varpi_N}(\tilde t_\pi)$
 for $T_1^{k_1}\dots T_N^{k_N}.$  Here $\Gamma_{k_1\varpi_1+\dots +k_N\varpi_N}$ denotes the 
 irreducible finite dimensional representation of $^LG$ with highest weight
  ${k_1\varpi_1+\dots +k_N\varpi_N}.$
 
 In cases when $^LG$ is reductive but not semisimple, this discussion must be adapted, as the choice of maximal unipotent $^LU$ does not by itself pin down a basis for the wieght lattice which may be used to define the grading.  In the case at hand, one needs only to observe that, since the adjoint representation of $GL_3(\C)$ factors through the projection to $PGL_3(\C)$ (which is semisimple)
 each of the representations 
 appearing in the decomposition of the symmetric algebra must as well. 
 Alternatively, one may define the grading using the weights of the derived group.
 
 The following fact is well-known:
 
 \begin{lem}
 Let $\varpi_1,\varpi_2$ denote the fundamental weights of $PGL_3(\C).$  Let $T_1,T_2$ and $X$ be 
  indeterminates associated, as above, to the grading on $\Sym^*(V)^{^LU},$ where $V$ is the eight-dimensional adjoint representation.  Then the Poincar\'e series of $Sym^*(V)^{LU}$ may be summed, 
  yielding the rational function:
  $$\frac{1-T_1^3T_2^3X^6}{(1-T_1T_2X)(1-T_1T_2X^2)(1-T_1^3X^3)(1-T_2^3X^3)(1-X^2)(1-X^3)}.$$
 \end{lem}
 To complete the proof of proposition \ref{p:Unramified} in the split case, we just need to show that
 $$\sum_{{m_1,m_2=0}\atop{3|(m_1-m_2)}}\frac{1-X^{\min(m_1,m_2)+1}}{1-X}X^{\max(m_1,m_2)}T_1^{m_1}T_2^{m_2}=
 \frac{1-T_1^3T_2^3X^6}{(1-T_1T_2X)(1-T_1T_2X^2)(1-T_1^3X^3)(1-T_2^3X^3)},$$
  which is a straightforward computation.
 
\subsubsection{Non-Split Case}
Suppose now that $\rho$ is not a square in the local firld $F.$  Then it is part of the $L$-group
formalism that the semisimple conjugacy class $\tilde t_\pi$ in $^LG=GL_3(\C)\rtimes \Gal(E/F)$
associated to $\pi$ is in the coset corresponding to the nontrivial element of $\Gal(E/F),$ which we denote $\Fr.$  Each such conjugacy class contains an element of the form 
$$\left(\begin{pmatrix} \mu&&\\&\pm1&\\&&\mu^{-1}\end{pmatrix}, \Fr\right).$$
Adjusting by an element of the center, we may assume the sign in the middle is plus.

Referring to section \ref{s:defOfRepr}, we see that our eight-dimensional representation decomposes into 
a $5$ dimensional $+1$ eigenspace for $\Fr$ on which $\left(\begin{smallmatrix}\mu&&\\&1&\\
&&\mu^{-1}\end{smallmatrix}\right)$ acts with eigenvalues $\mu^2,\mu,1,\mu^{-1},\mu^{-2},$ and 
a $3$-dimensional $-1$ eigenspace for $\Fr,$ on which 
a $5$ dimensional $+1$ eigenspace for $\Fr$ on which $\left(\begin{smallmatrix}\mu&&\\&1&\\
&&\mu^{-1}\end{smallmatrix}\right)$ acts with eigenvalues $\mu,1,\mu^{-1}.$  Hence, the local $L$-function 
is 
$$\frac 1{(1-\mu^2x)(1-\mu^2x^2)(1-x^2)(1-\mu^{-2}x)(1-\mu^{-2}x^2)},$$
where $x=q^{-3s+1}$ as before.  
This may also be written as 
$$\frac1{(1-x^2)}
\sum_{k_1,k_2=0}^\infty 
\Tr(\Gamma_{k_1}\otimes\Gamma_{k_2})\left(\begin{smallmatrix}\mu^2&\\&\mu^{-2}\end{smallmatrix}\right)x^{k_1+2k_2}.$$
Turning to the local integral, we find that in this case we have $m_1=m_2.$  Let us therefore
denote this quantity simply ``$m.$''
\begin{lem}
With the notation as above, we have 
$$K(t)=\Tr\Gamma_m\left(\begin{smallmatrix}\mu^2&\\&\mu^{-2}\end{smallmatrix}\right).$$
\end{lem}
\begin{proof}
This can be verified either by direct computation or by a close reading of \cite{C-S}.  In either
method it is necessary first to identify the precise unremified character of the torus of $\su$ corresponding to 
$\left(\left(\begin{smallmatrix} \mu&&\\&\pm1&\\&&\mu^{-1}\end{smallmatrix}\right), \Fr\right).$
For the convenience of the reader, we record that it is the map sending the torus element with coordinates $a$ and $b$ as above to $\mu^{v(a^2-b^2\rho)}.$  (Here $v$ is again the discrete 
valuation on the field $F.$)

This case of the proposition now follows from the identity
$$\sum_{k_1,k_2=0}^\infty \sum_{i=0}^{\min(k_1,k_2)}X^{k_1+2k_2}T^{k_1+k_2-2i}
=\frac1{(1-X^3)(1-TX)(1-TX^2)}
=\frac 1{1-X^3}\sum_{m=0}^\infty X^m\frac{1-X^{m+1}}{1-X}T^M,$$
which is straightforward to verify
\end{proof}
This also completes the proof of Proposition \ref{p:Unramified}
\end{proof}

\end{document}